\documentclass[11pt]{article}
\usepackage[utf8]{inputenc}
\usepackage[T1]{fontenc}
\usepackage{amsmath,amssymb,amsthm}
\usepackage{booktabs}
\usepackage{siunitx}
\usepackage[margin=2.7cm]{geometry}
\usepackage{hyperref}

\newtheorem{theorem}{Theorem}
\newtheorem{proposition}[theorem]{Proposition}
\newtheorem{corollary}[theorem]{Corollary}
\newtheorem{remark}[theorem]{Remark}
\theoremstyle{definition}

\title{All polyhedral graphs with at most 16 vertices are Schurian:\\
a computational verification}
\author{David Reh\\
  \small Independent Researcher, Fröndenberg/Ruhr, Germany\\
  \small \texttt{DavidReh@web.de}}
\date{September 2026}

\begin{document}
\maketitle

\begin{abstract}
Li, Ponomarenko and Zeman state their belief that every polyhedral graph
(i.e.\ every $3$-connected planar graph) is \emph{Schurian}: that its
coherent closure $\mathrm{WL}(X)$ coincides with the orbital configuration
of $\mathrm{Aut}(X)$. Combined with their Theorem~1.1, this would imply
that every polyhedral graph has Weisfeiler--Leman dimension at most~$2$.
We report a computational verification of the Schurian property for all
polyhedral graphs with at most $16$ vertices, a total of
\num{413024847068} graphs, none of which is non-Schurian. The computation
combines the generator \texttt{plantri} with a one-sided $1$-WL
discreteness filter and, for every graph surviving that filter, a full
comparison of the stable $2$-WL partition of $\Omega^2$ with the orbit
partition of $\mathrm{Aut}(X)$. The computation is guarded by the published anchor counts, a seeded
re-enumeration of the two largest stages, independent \texttt{nauty}
cross-checks and control graphs with mathematically fixed verdicts;
code, checkpoints and a verification script accompany the note as
ancillary files. For $n\le 13$ the result reproduces what already follows from the
published enumeration of coherent configurations, by an entirely different
route; from $n=14$ on, deriving it from that side would require a step we
could not find in the literature, and at $n=16$ the all-graphs statement
fails outright --- the Shrikhande graph is non-Schurian --- so there the
polyhedral restriction carries the result. The statement is established by
exhaustive computation; with respect to the general conjecture it is
instance evidence, not a proof. A companion note treats the torus side,
where non-Schurian triangulation graphs do occur at $n = 16$.
\end{abstract}

\medskip
\noindent{\small\emph{MSC 2020:} 05E30 (primary); 05C10, 05E18,
05-08.}

\section{Introduction}\label{sec:intro}

Let $X$ be a finite simple graph with vertex set $\Omega$, $|\Omega|=n$.
The \emph{coherent closure} $\mathrm{WL}(X)$ is the coarsest coherent
configuration on $\Omega$ refining the partition of $\Omega^2$ into the
diagonal, the edges and the non-edges; it is the stable partition of the
classical $2$-dimensional Weisfeiler--Leman refinement \cite{WL1968},
\cite[\S2.6.1]{ChenPonomarenko}. The group $\mathrm{Aut}(X)$
acts componentwise on $\Omega^2$, and its orbits form a coherent
configuration as well \cite[\S2.2.2]{ChenPonomarenko}, the \emph{orbital
configuration} of $\mathrm{Aut}(X)$. Since $2$-WL colours are invariant under
$\mathrm{Aut}(X)$, the orbital configuration always refines
$\mathrm{WL}(X)$. The graph $X$ is called \emph{Schurian} if the two
coincide, i.e.\ if the coherent configuration $\mathrm{WL}(X)$ is
Schurian in the sense of \cite[Def.~2.2.9 and
Thm.~2.2.10]{ChenPonomarenko}; this is also the definition of
\cite[p.~2]{LPZ}, both sources spelling the word `schurian'
(Remark~\ref{rem:def}). A graph is \emph{polyhedral} if it is planar and
$3$-connected; by Steinitz's theorem \cite[Thm.~4.1]{Ziegler} these are
exactly the graphs of convex $3$-polytopes.

The \emph{Weisfeiler--Leman dimension} $\dim_{\mathrm{WL}}(X)$ is the least
$k$ such that $k$-dimensional WL refinement distinguishes $X$ from every
non-isomorphic graph (\cite[Def.~18.4.2]{Grohe2017} for the class
$\{X\}$). Li, Ponomarenko and Zeman
\cite{LPZ} prove that Schurity is sufficient for a low Weisfeiler--Leman
dimension in this class:

\begin{theorem}[{\cite[Thm.~1.1]{LPZ}}]\label{thm:lpz}
Let $X$ be a polyhedral graph. If $X$ is Schurian, then
$\dim_{\mathrm{WL}}(X)\le 2$.
\end{theorem}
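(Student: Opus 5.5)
The plan is to show directly that $2$-WL identifies $X$: whenever a graph $Y$ receives the same $2$-WL colour histogram as $X$, we want $Y\cong X$. Two reductions come first. By the known result that $2$-WL recognises planarity and $3$-connectivity (Kiefer--Ponomarenko--Schweitzer), any such $Y$ is itself polyhedral, so it suffices to separate $X$ from other polyhedral graphs. Moreover, $2$-WL-equivalence of $X$ and $Y$ gives an algebraic isomorphism $\varphi\colon \mathrm{WL}(X)\to\mathrm{WL}(Y)$ that sends the edge relation to the edge relation. The statement therefore reduces to the following: every such $\varphi$ is induced by a combinatorial isomorphism, i.e.\ $\mathrm{WL}(X)$ is \emph{separable}.

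For separability I would use Schurity in the following way. Since $\mathrm{WL}(X)$ equals the orbital configuration of $G=\mathrm{Aut}(X)$, every basis relation is a $G$-orbit on $\Omega^2$. I would individualise a suitable small set of vertices, for instance the endpoints of a directed edge together with one face on each side, which pins down a flag of the polyhedral embedding. By Whitney's theorem the embedding of $X$ is unique. Hence the point stabiliser of such a flag in $G$ is trivial, and $\mathrm{Aut}(X)$ acts semiregularly on flags. Schurity transfers to the extension of $\mathrm{WL}(X)$ by this flag: because each fibre and relation is an orbit, individualising one point in each of a few orbits yields a coherent closure that is the orbital configuration of the stabiliser, which is trivial, and so the closure is discrete. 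On the $Y$-side, $\varphi$ can be extended to the individualised configurations, since the individualised points lie in corresponding fibres and $2$-WL controls extensions by one point in planar $3$-connected graphs via face-boundary walks. Once both sides are discrete and algebraically isomorphic, the algebraic isomorphism is realised by a bijection, and that bijection is the desired isomorphism $X\to Y$.

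The main obstacle is the extension step. $2$-WL handles one-point extensions only up to algebraic isomorphism, while individualising a flag needs two or three points. Separability of the $k$-point extension does not automatically follow from separability at the lower level. My first approach would be to show that, in a polyhedral graph, the $2$-WL colour of a pair $(u,v)$ with $uv$ an edge already determines the rotation at $u$. The reason is that the facial cycles through $uv$ are recoverable from counts of short paths constrained by $3$-connectivity. This would give a one-point extension that is already \emph{fully} determined by a directed edge, with no further choices. Concretely, I would argue that the coherent closure of $\mathrm{WL}(X)$ with the single point $u$ individualised, intersected with the neighbourhood of $u$, induces a cyclic order that is preserved by $\varphi$. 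The remaining neighbours are then forced, and discreteness propagates along the embedding by connectivity. If this works, the count of individualised points drops to one, and the one-point case is within reach of $2$-WL through the standard correspondence between $2$-WL and one-point extensions of the coherent closure.
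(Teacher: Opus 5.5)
The note gives no proof of this statement; it quotes it from Li, Ponomarenko and Zeman, so your proposal has to stand on its own. Your reduction to separability of $\mathrm{WL}(X)$ is fine, and so is your last step (discrete, algebraically isomorphic configurations are combinatorially isomorphic). But there is a genuine gap exactly where you place the main obstacle.

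Equivalence under $2$-WL gives you an algebraic isomorphism $\varphi\colon\mathrm{WL}(X)\to\mathrm{WL}(Y)$ and nothing more: $\varphi$ transports intersection numbers, not individualised points. The correspondence you invoke at the end ties one-point extensions of the coherent closure to the \emph{three}-dimensional algorithm. If $X$ and $Y$ are $3$-WL-equivalent, then for every $\alpha$ there is an $\alpha'$ such that $\varphi$ extends to $\mathrm{WL}(X)_\alpha\to\mathrm{WL}(Y)_{\alpha'}$. At dimension $2$ no such extension exists in general, even when the individualised points lie in corresponding fibres.

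The Shrikhande and $4\times4$ rook's graphs illustrate this. Their closures are algebraically isomorphic with edges going to edges. Yet $\varphi$ extends to no pair of one-point extensions, because a vertex neighbourhood induces a hexagon in one graph and two triangles in the other. So ``$\varphi$ can be extended to the individualised configurations'' is the entire content of the theorem, not a step in it.

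Your proposed repair does not supply it, for four reasons. First, a directed edge is two points, not one. Second, any refinement obtained by individualising $u$ alone is $\mathrm{Aut}(X)_u$-invariant, so it can never be discrete when $\mathrm{Aut}(X)_u\ne 1$; for the dodecahedron, every vertex stabiliser has order $6$. Third, when an automorphism fixes $u$ and $v$ and reverses the rotation at $u$, no colour of $(u,v)$ can pick an orientation. Fourth, ``a cyclic order preserved by $\varphi$'' presupposes that $\varphi$ is defined on the one-point extension, which is what had to be shown.

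A sanity check confirms that the plan cannot work as it stands. Schurity enters only in your discreteness step, where it is both misapplied and unnecessary. It is misapplied because Schurity of $\mathrm{WL}(X)$ tells you that the fibres of a one-point extension are the $\mathrm{Aut}(X)_\alpha$-orbits. It does not tell you that the closure after individualising several points is the orbital configuration of their pointwise stabiliser. It is unnecessary because individualising a few suitable vertices makes the closure of a polyhedral graph discrete without any Schurity assumption; this is the mechanism behind the Kiefer--Ponomarenko--Schweitzer bound~$3$.

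Everything else in your argument is hypothesis-free. So if it worked, it would prove $\dim_{\mathrm{WL}}(X)\le 2$ for every polyhedral graph. That is open, which is exactly why Li, Ponomarenko and Zeman can state it only conditionally, and why the note tests the hypothesis.

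What is missing is an argument in which Schurity does the work of the individualisation that $2$-WL cannot perform on the $Y$ side. In a Schurian closure the colour of a pair is its $\mathrm{Aut}(X)$-orbit. This has to be turned into a statement about $\varphi$ expressible through intersection numbers alone, since these are all that $\varphi$ carries over to $Y$. For the same reason, aiming at separability makes your preliminary reduction to polyhedral $Y$ superfluous, and any face-walk reasoning on the $Y$ side would have to be derived from those intersection numbers.
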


They add, on the same page and without assigning a number to the
statement, the expectation that the hypothesis is always satisfied:
``\emph{We believe that every polyhedral graph is schurian, and if this is
true, then by Theorem 1.1 below, the WL-dimension of any polyhedral graph
is at most 2}'' \cite[p.~2]{LPZ}. This is the statement the present note
tests. Its interest lies in the gap to the best unconditional bound: the
Weisfeiler--Leman dimension of planar graphs is at most~$3$ \cite{KPS},
and lowering the bound to~$2$ on the polyhedral subclass would follow from
Schurity via Theorem~\ref{thm:lpz}. Whether the bound $3$ is attained is
open: ``the question of whether the maximum dimension of the class of
planar graphs is 2 or 3 remains open'' \cite[p.~44:3]{KPS}, and Li,
Ponomarenko and Zeman name the search for a dimension-$3$ example
among polyhedral graphs as an initial motivation of their paper
\cite[\S1]{LPZ}.

\paragraph{What is known on the configuration side.}
Schurity of \emph{all} graphs on few vertices is a corollary of the
enumeration of coherent configurations. Klin and Ziv-Av \cite[Cor.~4.1]{KlinZivAv2017}
established that all coherent configurations of order at most~$13$ are
Schurian, that exactly one non-Schurian configuration of order~$14$ exists
(rank~$11$, fibres of sizes $6$ and $8$, automorphism group of order~$24$
isomorphic to $\mathrm{SL}(2,3)$). Ziv-Av \cite{ZivAv2018} gave the detailed
report of that same enumeration; there are exactly two non-Schurian
configurations of order~$15$ \cite[Cor.~4.1]{KlinZivAv2017},
\cite[\S2.1]{ZivAv2018}. Consequently every graph on at most $13$ vertices is Schurian,
which is the bound Li, Ponomarenko and Zeman also quote \cite[p.~2]{LPZ}.

Two remarks on how far this reaches. First, for orders $14$ and $15$ the
step from ``no non-Schurian configuration is the closure of a graph'' to
``every graph is Schurian'' is not made in
\cite{KlinZivAv2017,ZivAv2018} and we are not aware of a published
argument. For one of the three the point is immediate: the order-$15$
doubly regular tournament has rank~$3$ with a single antisymmetric pair of
off-diagonal basis relations, and the edge set of an undirected graph would
have to be a symmetric irreflexive union of basis relations, forcing a
closure of rank~$2$. The substance therefore lies with the order-$14$
configuration and its order-$15$ extension by a fibre of size~$1$
\cite[\S2.1]{ZivAv2018}. In the companion note \cite{Companion} this
point is settled computationally: neither configuration is the coherent
closure of a graph (all $64$ resp.\ $256$ candidate edge unions checked),
so every undirected graph on at most $15$ vertices is Schurian. The
present note does not rely on that step --- its screening covers the
polyhedral graphs of those orders directly. Second, for order~$16$ no enumeration of coherent configurations in
the general, possibly inhomogeneous case is available; the enumeration of
\cite{ZivAv2018} stops at order~$15$ and states that larger orders require
further optimisation \cite[\S4.1]{ZivAv2018}. Association schemes --- the
homogeneous case --- are classified for orders $16$ and $17$ by Hanaki and
Miyamoto \cite{HanakiMiyamoto}, and of the $222$ association schemes of
order~$16$ exactly $16$ are non-Schurian \cite[\S3]{KlinZivAv2017}. At
order~$16$, however, the situation changes qualitatively: one of those
$16$ schemes \emph{is} the coherent closure of a graph --- it is the
rank-$3$ scheme of the Shrikhande graph, our negative control in
Section~\ref{sec:controls} --- so, together with the first remark, $16$ is the smallest number of
vertices at which non-Schurian graphs exist, and no enumeration
argument could extend the all-graphs statement there. Non-Schurian graphs
at $n = 16$ are, moreover, not exhausted by the Shrikhande graph: the
companion note \cite{Companion} exhibits seven further ones --- six
graphs of torus triangulations, and one of a different origin: an
$8$-regular graph whose coherent closure is a non-Schurian association
scheme.
None of this reaches the polyhedral class: the
Shrikhande graph is not
planar, having $48 > 3\cdot 16 - 6$ edges, the six triangulation
graphs of \cite{Companion} have $48$ edges as well, and the seventh
has $64$; so for the polyhedral class the
question remains open, and settling it is exactly what the computation does.

\paragraph{Contribution.}
We screen the Schurian property directly on the graph side, for the full
polyhedral class up to $16$ vertices.

\begin{theorem}[computational]\label{thm:main}
Every polyhedral graph with at most $16$ vertices is Schurian.
\end{theorem}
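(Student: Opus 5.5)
The plan is an exhaustive computation organised so that the expensive step runs only on few graphs.

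\emph{Enumeration.} For each $n\le 16$ I will generate all polyhedral graphs up to isomorphism with \texttt{plantri} (option \texttt{-pc3}, split into residue classes \texttt{res/mod} for parallel jobs). Each stage is checked against the published counts of $3$-connected planar graphs (OEIS A000944). As a further guard against lost jobs, I will re-enumerate the two largest stages with a different splitting.

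\emph{Cheap sufficient filter.} The filter rests on the observation that if $1$-WL colour refinement, started from the uniform colouring, reaches the discrete partition of $\Omega$, then $X$ is Schurian. The argument: the coherent closure $\mathrm{WL}(X)$ refines the $1$-WL partition on its diagonal, so all fibres of $\mathrm{WL}(X)$ are singletons. A coherent configuration with singleton fibres is the discrete partition of $\Omega^2$. The orbital configuration of the trivial group $\mathrm{Aut}(X)$ is also discrete, so the two coincide. The filter is one-sided: a graph that passes is certified Schurian, while a graph that fails is merely passed on. I expect the overwhelming majority of the roughly $4\cdot 10^{11}$ graphs to be asymmetric and discharged this way, in $O(n^2)$ time per graph.

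\emph{Full comparison for survivors.} For each surviving graph I will compute:
\begin{itemize}
\item the stable $2$-WL partition of $\Omega^2$, by iterating $c(x,y)\mapsto (c(x,y),\{\!\{(c(x,z),c(z,y)) : z\in\Omega\}\!\})$ with canonical relabelling until the number of classes stops growing;
\item $\mathrm{Aut}(X)$ via \texttt{nauty}, together with the orbits of its generators on $\Omega^2$, by union--find over the generator images.
\end{itemize}
Since the orbit partition always refines the $2$-WL partition, it suffices to compare the number of classes. $X$ is Schurian if and only if the two counts agree. A shortcut is available for planar $3$-connected graphs: they have vertex stabilisers of order at most $2\cdot$(degree), by Whitney's uniqueness of the embedding, so $\mathrm{Aut}(X)$ is small and the orbit computation is cheap.

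\emph{Validation.} I will check the implementation on control graphs with known verdicts:
\begin{itemize}
\item the Shrikhande graph must be reported non-Schurian;
\item the $4\times4$ rook graph and the Platonic solids must be reported Schurian.
\end{itemize}
I will also cross-check the automorphism group orders against an independent \texttt{nauty} run.

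The main obstacle I foresee is the volume at $n=15,16$, where there are about $3.6\cdot10^{11}$ graphs. This makes the filter's throughput and the checkpointing and completeness bookkeeping of the enumeration the critical parts. It also raises a danger of silently missing a residue class, which the anchor counts and the seeded re-enumeration are meant to exclude. The mathematics itself is only the filter lemma above; the rest is careful engineering.
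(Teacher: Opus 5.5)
Your plan is correct and is essentially the paper's own route: \texttt{plantri} in polyhedral mode with residue splitting and the published counts as completeness anchors, and the one-sided $1$-WL-discreteness filter justified exactly as in Proposition~\ref{prop:filter}. That justification runs as follows: the fibres of $\mathrm{WL}(X)$ refine the stable $1$-WL colouring, singleton fibres force singleton basis relations, and $\mathrm{Aut}(X)=1$. For the surviving graphs you, like the paper, compare the class counts of the stable $2$-WL partition and of the $\mathrm{Aut}(X)$-orbits on $\Omega^2$, validated on the Shrikhande, rook and Platonic controls. The differences are only in implementation. You take $\mathrm{Aut}(X)$ from \texttt{nauty} generators with union--find, which suffices since the group is finite; the paper instead enumerates all automorphisms by its own backtracking and uses \texttt{nauty} only as an independent cross-check. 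You also re-enumerate with a different splitting, whereas the paper re-runs a seeded $1\%$ sample of the same splits.
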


\begin{proof}
By the exhaustive computation described in Section~\ref{sec:method},
covering all \num{413024847068} polyhedral graphs with at most $16$
vertices; no non-Schurian graph occurs.
\end{proof}

\begin{corollary}\label{cor:dim}
Every polyhedral graph with at most $16$ vertices has Weisfeiler--Leman
dimension at most~$2$.
\end{corollary}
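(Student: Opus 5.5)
The corollary is a direct combination of Theorem~\ref{thm:main} with Theorem~\ref{thm:lpz}, and the plan is simply to chain the two. Let $X$ be a polyhedral graph with $n\le 16$ vertices. By Theorem~\ref{thm:main}, $X$ is Schurian, i.e.\ the coherent closure $\mathrm{WL}(X)$ equals the orbital configuration of $\mathrm{Aut}(X)$. Since $X$ is polyhedral and Schurian, the hypotheses of Theorem~\ref{thm:lpz} hold verbatim, and we conclude $\dim_{\mathrm{WL}}(X)\le 2$.

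The step that needs care is not the logic but making sure the two definitions match exactly. First, the notion of ``Schurian'' certified by the computation must be the one used in \cite{LPZ}. Both are stated as equality of $\mathrm{WL}(X)$ with the orbital configuration of $\mathrm{Aut}(X)$ (Remark~\ref{rem:def}), so the verdict of Theorem~\ref{thm:main} transfers without change. Second, ``polyhedral'' must mean planar and $3$-connected in both places, which is how the paper fixes the term in Section~\ref{sec:intro}. Third, the Weisfeiler--Leman dimension must be taken in the sense of Theorem~\ref{thm:lpz}, namely distinguishing $X$ from \emph{every} non-isomorphic graph. This is not only from polyhedral ones, and it is the definition \cite[Def.~18.4.2]{Grohe2017} recorded in the introduction.

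No size restriction enters Theorem~\ref{thm:lpz}, so the bound $n\le 16$ is inherited solely from Theorem~\ref{thm:main}. The corollary is therefore exactly as strong as that computational statement. I expect no genuine obstacle beyond these definitional checks. The only substantive content is the exhaustive verification behind Theorem~\ref{thm:main}, which the corollary takes as given.
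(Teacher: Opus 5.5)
Your proposal is correct and is the same argument as the paper's, which simply chains Theorem~\ref{thm:main} with Theorem~\ref{thm:lpz}. Your definitional checks are sound and only make explicit what the paper leaves implicit: Schurity in the sense of Remark~\ref{rem:def}, polyhedral meaning planar and $3$-connected, and WL dimension measured against all graphs.
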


\begin{proof}
Immediate from Theorem~\ref{thm:main} and Theorem~\ref{thm:lpz}.
\end{proof}

No planar graph of Weisfeiler--Leman dimension~$3$ is known; by
Corollary~\ref{cor:dim}, a polyhedral example, if one exists, has at
least $17$ vertices.

Two levels must be kept apart when judging what is new here, and we state
the distinction once, bindingly for the rest of the note.

\emph{Enumeration level.} That there are exactly \num{387591510244}
polyhedral graphs on $16$ vertices is not our result; the counts are
published, and we use them as anchors, not as findings. They appear in
Appendix~E of the \texttt{plantri} guide \cite{plantri} and as sequence
A000944 \cite{OEIS}. Reproducing them exactly is precisely what makes them
usable as completeness gates.

\emph{Schurity level.} The per-graph Schurity measurement is, to the best
of our knowledge, the first computation of its kind on this class. For
$n\le 13$ its \emph{outcome} is anticipated by \cite{KlinZivAv2017,ZivAv2018},
so those cases are calibration --- with a residual verification value we
make explicit in Section~\ref{sec:limits}: Ziv-Av states himself that no
formal proof of correctness of his programs is offered, and that external
agreement extends only to order~$13$ \cite[\S2.3]{ZivAv2018}. Our screening
is an independent check by a different route (graph-side screening rather
than configuration-side enumeration), and from $n=14$ on the first one
beyond that externally corroborated range.

\paragraph{Method in three sentences.}
Graphs are generated by \texttt{plantri} in the isomorph-free polyhedral
mode, split into disjoint residue classes for parallel processing. Each
graph first passes a one-sided filter: if the stable $1$-WL vertex colouring
is discrete, the graph is trivially Schurian (Proposition~\ref{prop:filter}),
and no further work is needed; its rate grows with $n$, from $91.7\%$
aggregated over the range $n\le 10$ to $99.99\%$ at $n=16$. Every remaining graph receives
the full verdict, comparing the number of classes of the stable $2$-WL
partition of $\Omega^2$ with the number of $\mathrm{Aut}(X)$-orbits on
$\Omega^2$, both computed exactly.

\section{Preliminaries}\label{sec:prelim}

We use the terminology of Chen and Ponomarenko \cite{ChenPonomarenko}. A
\emph{coherent configuration} on a finite set $\Omega$ is a partition
$\mathcal{S}$ of $\Omega^2$ such that (i) the diagonal is a union of
classes, (ii) $\mathcal{S}$ is closed under transposition, and (iii) for
all $R,S,T\in\mathcal{S}$ the number
$|\{w : (u,w)\in R,\ (w,v)\in S\}|$ is the same for all $(u,v)\in T$. The
classes are the \emph{basis relations}, their number is the
\emph{rank}, and the classes contained in the diagonal define the
\emph{fibres}.

For a graph $X$, the coherent closure $\mathrm{WL}(X)$ is obtained from the
initial partition of $\Omega^2$ into diagonal, edges and non-edges by
iterating the refinement
\[
  \mathrm{sig}(u,v)\;=\;\Bigl(c(u,v),\ \bigl[\!\!\bigl[\,(c(u,w),c(w,v))\ :\
  w\in\Omega\,\bigr]\!\!\bigr]\Bigr),
\]
where $c$ is the current colouring and $[\![\cdot]\!]$ denotes a multiset.
Because the signature contains the previous colour, every round is a
refinement, and the process is stable as soon as the number of classes
stops growing.

The one-dimensional refinement used by our filter is the classical colour
refinement: vertices start with their degree and are recoloured by the pair
(own colour, multiset of neighbour colours) until the number of colour
classes is stable; we call the result the \emph{stable $1$-WL colouring}.
The two are related in the standard way, which we use in
Proposition~\ref{prop:filter}: the fibres of $\mathrm{WL}(X)$ refine the
stable $1$-WL colouring. Indeed, each basis relation is contained in the
product of two fibres \cite[Cor.~2.1.7]{ChenPonomarenko} and refines
the edge/non-edge partition, so the signature of $(v,v)$ determines the
degree of $v$ and the multiset of fibres of its neighbours; the
colouring $v\mapsto c(v,v)$ therefore refines the degree colouring and
is stable under one round of colour refinement. The converse fails ---
a regular graph has a single $1$-WL class but may have several fibres.

For a permutation group $G\le\mathrm{Sym}(\Omega)$ let $\mathrm{Orb}(G)$
denote the partition of $\Omega^2$ into the orbits of $G$ acting by
$g\cdot(u,v)=(g(u),g(v))$, i.e.\ into the orbitals ($2$-orbits) of $G$;
in the notation of \cite[Def.~2.2.3]{ChenPonomarenko},
$\mathrm{Orb}(G)=\mathrm{Orb}(G,\Omega^2)$ is the set of basis
relations of the coherent configuration $\mathrm{Inv}(G)$ associated
with $G$ (there, $\mathrm{Orb}(G)$ without second argument denotes the
orbits on points). We write $\mathrm{Orb}(X):=\mathrm{Orb}(\mathrm{Aut}(X))$
and call it the \emph{orbital configuration} of $\mathrm{Aut}(X)$. As
$2$-WL colours are $\mathrm{Aut}(X)$-invariant, $\mathrm{Orb}(X)$ refines
$\mathrm{WL}(X)$; hence
\[
  X \text{ is Schurian}
  \iff |\mathrm{Orb}(X)| = |\mathrm{WL}(X)| ,
\]
and comparing the two class \emph{numbers} suffices --- a fact the
implementation uses and, as a safeguard, re-verifies for every graph it
fully checks (Section~\ref{sec:method}).

\begin{remark}\label{rem:def}
A coherent configuration $\mathcal X$ on $\Omega$ is \emph{Schurian} if
$\mathcal X=\mathrm{Inv}(\mathrm{Aut}(\mathcal X))$, equivalently if
$\mathcal X=\mathrm{Inv}(K)$ for some $K\le\mathrm{Sym}(\Omega)$
\cite[Def.~2.2.9, Thm.~2.2.10]{ChenPonomarenko}, \cite[\S2.3]{LPZ}.
Since $\mathrm{Aut}(\mathrm{WL}(X))=\mathrm{Aut}(X)$
\cite[Cor.~2.6.6]{ChenPonomarenko}, the graph $X$ is Schurian in the
sense of \cite[p.~2]{LPZ} if and only if $\mathrm{WL}(X)=\mathrm{Orb}(X)$,
which is the form tested here.
\end{remark}

\section{Method and correctness safeguards}\label{sec:method}

\subsection{Generation}

Graphs are produced by \texttt{plantri} \cite{plantri} (version 5.8) in the
mode \texttt{-p -c3}, which generates plane simple graphs of connectivity
at least~$3$ --- exactly the polyhedral graphs --- without isomorphic
duplicates. Isomorph-freeness there is defined with respect to the embedding
on the sphere; by Whitney's theorem \cite{Whitney1932} a $3$-connected
planar graph has an essentially unique embedding, so this coincides with
isomorph-freeness of the abstract graphs. The binary \texttt{planar\_code}
stream is parsed directly; the parser checks adjacency symmetry and vertex
count for every graph and aborts on any violation.

For parallelism we use \texttt{plantri}'s residue splitting
\texttt{res/mod}, which divides the object set into \texttt{mod} disjoint
classes and generates only the \texttt{res}-th, with isomorph-freeness
preserved across classes. The split granularity was $64,\ 256,\ 512,\
1024,\ 4096,\ 16384$ for $n=11,\dots,16$; the value for $n=16$ was chosen
so that each class still contains roughly $150$ of the
$\num{2406841}$ splitting cases that \texttt{plantri}'s own splitting-cost
output reports at internal level~$15$, keeping class sizes reasonably
uniform. (That this is exactly the number of $3$-connected plane
triangulations on $15$ vertices is no accident: the generator reaches the
polyhedral graphs by removing edges from triangulations, and splits one
level below the target size.)

For $n=15$ and $n=16$ the generator was run through a pipe (avoiding
some $39$~TB of transient writes), which under MinGW requires a
one-line binary-mode shim for \texttt{stdout}: the C runtime opens
\texttt{stdout} in text mode and would translate every \texttt{0x0a}
byte of the binary stream into \texttt{0x0d0a}. The generator source
is unchanged; the shim, and the check that pipe and file output agree
byte for byte for $n=10$ and $n=11$, are documented in the README of
the ancillary files. The stages up to $n=14$ wrote to a temporary file
instead and are unaffected.

\subsection{The one-sided filter}

\begin{proposition}\label{prop:filter}
If the stable $1$-WL vertex colouring of $X$ is discrete, i.e.\ assigns $n$
pairwise distinct colours, then $X$ is Schurian, $\mathrm{Aut}(X)=1$, and
both $\mathrm{WL}(X)$ and $\mathrm{Orb}(X)$ consist of $n^2$ singletons.
\end{proposition}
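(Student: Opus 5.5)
The plan is to use the relation stated in Section~\ref{sec:prelim}: the fibres of $\mathrm{WL}(X)$ refine the stable $1$-WL colouring. Combined with Cor.~2.1.7 of \cite{ChenPonomarenko}, this forces $\mathrm{WL}(X)$ to be the discrete partition. The statements about $\mathrm{Aut}(X)$ and $\mathrm{Orb}(X)$ then follow from the general fact that $\mathrm{Orb}(X)$ refines $\mathrm{WL}(X)$.

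\emph{Step 1 (fibres are singletons).} Assume the stable $1$-WL colouring is discrete. The colouring $v\mapsto c(v,v)$ induced by the fibres of $\mathrm{WL}(X)$ refines it, as shown in Section~\ref{sec:prelim}. Refining a discrete partition of $\Omega$ gives a discrete partition, so every fibre is a singleton $\{v\}$.

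\emph{Step 2 (all basis relations are singletons).} By \cite[Cor.~2.1.7]{ChenPonomarenko}, each basis relation $R$ of $\mathrm{WL}(X)$ lies in $\Delta_1\times\Delta_2$ for fibres $\Delta_1,\Delta_2$. These fibres are singletons $\{u\},\{v\}$, so $R\subseteq\{(u,v)\}$. Basis relations are nonempty, hence $R=\{(u,v)\}$, and $\mathrm{WL}(X)$ consists of the $n^2$ singletons of $\Omega^2$.

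\emph{Step 3 (automorphisms and orbits).} Any $g\in\mathrm{Aut}(X)$ preserves stable $1$-WL colours, since the refinement is isomorphism-invariant. A discrete colouring therefore forces $g(v)=v$ for all $v$, so $\mathrm{Aut}(X)=1$. The trivial group has only singleton orbits on $\Omega^2$, so $\mathrm{Orb}(X)$ also consists of $n^2$ singletons. Alternatively, $\mathrm{Orb}(X)$ refines $\mathrm{WL}(X)$, which is already discrete. Thus $|\mathrm{Orb}(X)|=|\mathrm{WL}(X)|=n^2$, and by the equivalence in Section~\ref{sec:prelim}, or Remark~\ref{rem:def}, $X$ is Schurian.

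The only nontrivial input is that the fibres of $\mathrm{WL}(X)$ refine the $1$-WL colouring, and the paper has already argued this in Section~\ref{sec:prelim}. If more care is wanted there, I would verify by induction on rounds of colour refinement that each $1$-WL round is determined by $c(v,v)$ in the stable $2$-WL colouring. The $2$-WL signature of $(v,v)$ encodes the multiset of pairs $(c(v,w),c(w,v))$. Each $c(v,w)$ records whether $vw$ is an edge and, through Cor.~2.1.7, which fibre contains $w$. So the signature determines the degree and the multiset of neighbour fibres. I expect no real obstacle; the proof is short bookkeeping.
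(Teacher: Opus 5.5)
Your proposal is correct and matches the paper's proof: Aut-invariance of $1$-WL colours gives $\mathrm{Aut}(X)=1$ and singleton orbits, the fibres of $\mathrm{WL}(X)$ refine the $1$-WL classes so all fibres are singletons, \cite[Cor.~2.1.7]{ChenPonomarenko} then makes every basis relation a single pair, and the two partitions coincide. The only differences are presentational: you handle the $\mathrm{WL}(X)$ side before the automorphism side, and you state explicitly that basis relations are nonempty.
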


\begin{proof}
(i) Automorphisms preserve stable $1$-WL colours. If all colours are
distinct, every automorphism fixes every vertex, so $\mathrm{Aut}(X)=1$ and
its orbits on $\Omega^2$ are the $n^2$ singletons. (ii) The fibres of
$\mathrm{WL}(X)$ refine the stable $1$-WL classes, by the domination
observed in Section~\ref{sec:prelim}. Under the hypothesis every $1$-WL
class is a singleton, hence so is every fibre. Every basis relation is contained
in the product of two fibres, so every basis relation is a single pair
$\{(u,v)\}$ and $\mathrm{WL}(X)$ consists of $n^2$ singletons as well.
(iii) The two partitions coincide, hence $X$ is Schurian.
\end{proof}

The filter is deliberately \emph{one-sided}: a non-discrete colouring
yields no conclusion and the full check is performed. Its hit rate is
therefore a performance parameter and not a correctness risk. Two empirical
gates guard the implementation nonetheless. First, on the vertex-transitive
control graphs of Section~\ref{sec:controls} the filter must never fire ---
otherwise it could hide the negative control from the full check. Second, in
a validation mode both branches are computed for every graph and compared:
whenever the filter fires, the full check must confirm Schurity with
$|\mathrm{Aut}(X)|=1$ and $|\mathrm{WL}(X)|=|\mathrm{Orb}(X)|=n^2$. This was
run over all \num{440564} graphs on $11$ vertices with zero mismatches, and
over all graphs on $8$ and $9$ vertices in the test suite.

\subsection{The full verdict}

For a graph surviving the filter, three objects are computed exactly.

\emph{Stable $2$-WL partition.} The refinement above is iterated on ordered
pairs including the diagonal until the class count stagnates; each round
costs $O(n^3\log n)$. Only the number of classes enters the verdict,
never the colour identifiers, which are not canonical.

\emph{Automorphism group.} All automorphisms are enumerated explicitly by
backtracking over the $1$-WL colour classes: vertices are assigned in order
of increasing colour-class size, candidate images are restricted to the same
$1$-WL class (legitimate, as $1$-WL colours are $\mathrm{Aut}$-invariant),
and each partial assignment is checked against all previously assigned
vertices. The result is the complete list of permutations, so
$|\mathrm{Aut}(X)|$ is obtained as an exact count rather than as a product
of orbit lengths.

\emph{Orbit partition.} The orbits of $\mathrm{Aut}(X)$ on $\Omega^2$ are
computed by union--find over the $n^2$ cells, uniting $(u,v)$ with
$(g(u),g(v))$ for every enumerated permutation $g$.

The verdict is $|\mathrm{WL}(X)| = |\mathrm{Orb}(X)|$. Two structural
assertions are checked for every graph before the verdict is formed: that
every orbit class lies entirely within one WL class (the refinement
direction, which must hold mathematically), and that
$|\mathrm{Orb}(X)| \ge |\mathrm{WL}(X)|$. A violation of either would
indicate an implementation error; the program then aborts rather than
emitting a verdict.

Enumeration is capped at $10^6$ automorphisms. A cap hit does \emph{not}
produce a verdict: it raises a dedicated error, is logged, and makes the
run report an open verdict with a non-zero exit code. No capped graph
occurred in any run reported here.

\subsection{Control graphs}\label{sec:controls}

Before any screening the checker is exercised on graphs whose verdicts are
fixed mathematically.

\begin{itemize}\itemsep2pt
\item \emph{Shrikhande graph} $\mathrm{Cay}(\mathbb{Z}_4^2,\{\pm(1,0),
  \pm(0,1),\pm(1,1)\})$, an $\mathrm{SRG}(16,6,2,2)$ with
  $|\mathrm{Aut}|=192$; its coherent closure is non-Schurian
  \cite[Example~2.6.17]{ChenPonomarenko} (stated there for the complementary
  $\mathrm{SRG}(16,9,4,6)$). As a strongly regular graph its $2$-WL partition
  has exactly $3$ classes, while the vertex stabiliser has order~$12$ and
  cannot act transitively on the $9$ non-neighbours, so it has at least $4$
  orbits: a \emph{negative} control, the checker must report ``not Schurian''.
\item \emph{$4\times4$ rook's graph} $=H(2,4)$. By Shrikhande's uniqueness
  theorem \cite{Shrikhande1959} these two are the only graphs with parameters
  $(16,6,2,2)$ \cite[\S3]{KlinZivAv2017}; the rook's graph has
  $|\mathrm{Aut}|=1152$ and a rank-$3$ group, giving a \emph{positive}
  control with $3=3$.
\item \emph{Dodecahedral graph}, polyhedral, distance-transitive,
  $|\mathrm{Aut}|=120$, diameter $5$: a positive control inside the target
  class, with $6$ orbits on $\Omega^2$. It is also the one Platonic graph whose
  Schurity Li, Ponomarenko and Zeman note can be settled by a straightforward
  computation \cite[p.~2]{LPZ}.
\end{itemize}

The Shrikhande/rook pair also controls the discriminating power of the
tooling: $2$-WL cannot separate the two graphs, yet they are not isomorphic.
All these verdicts, including the group orders $192$, $1152$ and $120$, are
fixed as automated tests.

\subsection{Run-level gates}\label{sec:gates}

\begin{enumerate}\itemsep3pt
\item \emph{Double count per split.} \texttt{plantri} reports on
  \texttt{stderr} how many objects it wrote; this number must equal our
  parser's count for every split, otherwise the run stops. A missing or
  unparsable report is itself an error.
\item \emph{Completeness anchor per $n$.} The sum over all splits must equal
  the published count of polyhedral graphs on $n$ vertices exactly. Every
  $n$ reported here passed this gate. Every checkpoint file was in addition
  re-read by a separate script that shares no code with the pipeline and
  re-derives all published figures from the raw records; for $n=16$ it
  confirms $16384$ of $16384$ splits present, no duplicates, no parse errors,
  sum exact. That script is included with the ancillary files.
\item \emph{Seeded re-enumeration.} For $n=15$ and $n=16$ a $1\%$ sample of
  splits, drawn with seed \texttt{20260724}, was recomputed in a separate run
  and compared per split: $40/40$ and $163/163$ splits agreed in all five
  counting fields (count, filtered, non-Schurian, capped, mismatches),
  covering \num{234435128} and \num{3777936313} graphs respectively. This is
  a tuple comparison, not a bitwise comparison of outputs. Its purpose is to
  guard against silent bit errors: the machine has no ECC memory.
\item \emph{Independent group orders.} Graphs were passed to \texttt{nauty}
  \cite{nauty} and its group order compared with ours. Two samplings were
  used: an exhaustive one over all \num{2606} graphs on $9$ vertices, and a
  strided one (seed-chosen splits at granularity $4096$, every $k$-th graph
  within them), giving \num{25162} graphs for $n=13$, \num{10657} for
  $n=14$, \num{10031} for $n=15$ and \num{10203} for $n=16$. All comparisons
  agreed; see Section~\ref{sec:limits} for what
  the strided samples do and do not reach. The tool is first anchored on the
  control graphs, where it must return $192$, $1152$ and $120$.
\item \emph{Interpreter gate.} The stages up to $n=14$ ran under CPython,
  those for $n=15$ and $n=16$ under PyPy for speed. Two full stages, and
  one split of a third, were cross-checked against the other interpreter
  with per-split identical counts: the full runs for $n=12$ and $n=13$ were repeated under PyPy, and
  split~$0$ of $n=16$ --- first computed under CPython in a timing run ---
  was recomputed under PyPy at $2.2$ times the speed on identical counts.
  For $n=14$ and $n=15$ no such repetition was made. Splits $0$ and $1$ of
  the $n=16$ checkpoint stem from the CPython timing run and were retained
  there rather than overwritten by the repetition; the remaining
  \num{16382} splits ran under PyPy. A one-seed subsample of \num{5336} of the
  $n=13$ \texttt{nauty} graphs was drawn identically under both
  interpreters and agreed.
\item \emph{Crash safety.} Checkpoints are rewritten atomically after every
  completed split and validated strictly on resume: corruption, duplicate
  splits or mismatched parameters abort the run instead of being skipped.
  Findings are flushed and \texttt{fsync}ed immediately.
\item \emph{Filter-negative cross-check.} After the production runs, a
  seeded sample of splits ($40$ of $16384$ at $n=16$ and $20$ of $4096$
  at $n=15$, seed \texttt{20260828}) was re-generated and re-filtered;
  the per-split count and filter fields matched the production
  checkpoints exactly for all $60$ splits, re-covering
  \num{1050294199} graphs. Every one of the \num{83189}
  filter-negative graphs in these splits --- the population in which
  all symmetric graphs live; \num{83176} of them have
  $|\mathrm{Aut}(X)| > 1$, with group orders up to $24$ --- was then
  checked on three axes: the full production verdict was recomputed;
  the enumerated automorphism list was certified against
  \texttt{nauty} --- its length against the \texttt{nauty} group
  order for every single graph, the list verified duplicate-free, and
  each listed map verified
  individually to be a bijection preserving adjacency, so the list is
  pinned element
  by element and, through the order, in its entirety; and the $2$-WL
  class count was re-derived by an independent reimplementation (the
  one used in the verification battery of the companion note
  \cite{Companion}), with the orbit count re-counted by a second
  implementation over the certified list. No deviation
  occurred on any axis, and every graph was confirmed Schurian.
\end{enumerate}

\section{Results}\label{sec:results}

Table~\ref{tab:results} gives the screening results. The filter column
reports how many graphs were disposed of by Proposition~\ref{prop:filter};
the remaining ``full checks'' each received the complete verdict of
Section~\ref{sec:method}. No graph was non-Schurian, and no verdict was left
open by a cap, at any $n$.

\begin{table}[ht]\centering
\begin{tabular}{rrrrrr}
\toprule
$n$ & polyhedral graphs & $1$-WL discrete & full checks & filter rate &
non-Schurian\\
\midrule
$4\text{--}10$ & \num{35207}          & \num{32269}          & \num{2938}      & $91.66\%$ & $0$\\
$11$           & \num{440564}         & \num{430494}         & \num{10070}     & $97.71\%$ & $0$\\
$12$           & \num{6384634}        & \num{6335968}        & \num{48666}     & $99.24\%$ & $0$\\
$13$           & \num{96262938}       & \num{96039167}       & \num{223771}    & $99.77\%$ & $0$\\
$14$           & \num{1496225352}     & \num{1495112910}     & \num{1112442}   & $99.93\%$ & $0$\\
$15$           & \num{23833988129}    & \num{23828671708}    & \num{5316421}   & $99.98\%$ & $0$\\
$16$           & \num{387591510244}   & \num{387564497271}   & \num{27012973}  & $99.99\%$ & $0$\\
\midrule
total          & \num{413024847068}   & \num{412991119787}   & \num{33727281}  &           & $0$\\
\bottomrule
\end{tabular}
\caption{Screening of all polyhedral graphs with $4\le n\le 16$. Counts of
polyhedral graphs are the published anchors \cite{plantri,OEIS}, matched
exactly by every run. No polyhedral graph with $n\le 6$ has a discrete
stable $1$-WL colouring, so the filter first fires at $n=7$; from there its
rate grows monotonically with $n$.}
\label{tab:results}
\end{table}

The two largest stages dominate the cost. Summing the per-split wall-clock
times recorded in the checkpoints gives $\num{180}$ worker hours for $n=15$
and $\num{3110}$ for $n=16$, i.e.\ about $6.9$ days on $20$ worker
processes of a $24$-thread consumer machine, spread over the period from
July~24 to August~3, 2026 (the sum includes idle inflation from
suspends and throttling). The run was interrupted several times,
including one hard power loss; the seeded re-enumeration was performed
\emph{after} all of these and reproduced the sampled splits field by
field, which is evidence against silent corruption in the checkpointed
results.

\section{Scope and limitations}\label{sec:limits}

We state the boundaries of the result explicitly.

\begin{enumerate}\itemsep3pt
\item \emph{Empirical, bounded.} Theorem~\ref{thm:main} is established by
  exhaustive computation rather than by a mathematical argument; with respect
  to the belief expressed in \cite[p.~2]{LPZ} it is instance evidence only.
  Every formulation must carry the bound $n\le 16$.
\item \emph{Class.} Only $3$-connected planar graphs were screened. The
  question for planar graphs without the connectivity assumption is
  untouched here.
\item \emph{Where the novelty lies.} For $n\le 13$ the outcome also follows
  from \cite{KlinZivAv2017,ZivAv2018}; our contribution there is an
  independent route, which has residual value because that enumeration is
  itself computational and, by its author's own statement, offered without a
  formal correctness proof and externally corroborated only up to order~$13$
  \cite[\S2.3]{ZivAv2018}. From $n=14$ on, a derivation from the
  configuration side would need the additional step discussed in
  Section~\ref{sec:intro}, which is unpublished as far as we can tell; at
  $n=16$ no such derivation is possible for graphs in general, since the
  Shrikhande graph realises a non-Schurian closure, and the polyhedral
  restriction carries the whole statement. The counts of
  polyhedral graphs themselves are reproduction, not a finding.
\item \emph{Weisfeiler--Leman dimension.} Corollary~\ref{cor:dim} is a
  consequence of the published Theorem~\ref{thm:lpz}; we did not measure
  $\dim_{\mathrm{WL}}$ ourselves.
\item \emph{Re-enumeration coverage.} The seeded $1\%$ re-enumeration was
  carried out for $n=15$ and $n=16$ only. For $n\le 14$ --- \num{1599348695}
  of the \num{413024847068} graphs --- it was never run; the infrastructure
  exists and the gap is closable. Those stages are not unguarded, though: for
  $n=12$ and $n=13$ the repeat runs of Section~\ref{sec:gates}, and for
  $n=11$ the validation run of Section~\ref{sec:method}, cover every graph,
  and all stages carry the anchor counts and the double count per split.
\item \emph{A field that carries no weight.} The checkpoints contain a
  \texttt{mismatches} field which is structurally zero on the production
  path, since it is incremented only in the validation mode described in
  Section~\ref{sec:method}, which the production and re-enumeration runs do
  not use. It must not be read as an independent verification signal; the
  second line of defence consists of the anchor gates, the re-enumeration and
  the \texttt{nauty} cross-check.
\item \emph{Reach of the cross-check.} Two caveats apply to the
  \texttt{nauty} comparison, and the second is the more serious one. It was
  drawn for $n=16$ from the same split list as for $n=15$, only with a
  different stride, so it is correlated with the split structure rather than
  independently sampled. More importantly, it samples uniformly from all
  graphs, and since at least $99.97\%$ of them are filtered out as
  $1$-WL-discrete and hence have a trivial automorphism group, the expected
  number of sampled graphs with $|\mathrm{Aut}(X)|>1$ is only about $2$ for
  $n=15$ and below $1$ for $n=16$. What the sample verifies at those sizes is
  therefore essentially the trivial-group case. This gap has since
  been closed by the filter-negative cross-check of
  Section~\ref{sec:gates}: in the sampled splits, every
  filter-negative graph --- the population in which all symmetric
  graphs live --- had its automorphism list certified against
  \texttt{nauty}, its $2$-WL class count re-derived by an independent
  implementation, and its verdict recomputed, with no deviation among
  \num{83189} graphs. The remaining evidence stands as before: all
  \num{2606} graphs on $9$
  vertices were compared with \texttt{nauty} exhaustively, the control graphs
  pin the group orders $192$, $1152$ and $120$, and the validation runs cover
  every graph on $8$, $9$ and $11$ vertices.
\item \emph{Trusted components.} The generator, the isomorph-freeness of its
  output, and \texttt{nauty}'s group orders are taken as given. They are
  anchored --- by the published counts, by the double count, and by the
  control graphs --- but not re-derived. The anchoring is weaker than it
  looks for the two largest stages: the published counts for $n=15$ and
  $n=16$ originate from the same generator (they are stated in its guide),
  so agreement there confirms our pipeline rather than the generator.
  Independent enumerations predating that generator exist, but they reach far
  smaller sizes: Duijvestijn and Federico \cite{DuijvestijnFederico} tabulate
  polyhedral graphs by edge count up to $22$ edges and, with additional data
  beyond that, give the exact count for $10$ faces --- equivalently, by
  duality, for $10$ vertices --- while for larger sizes they give estimates
  only: their projection for $12$ faces was ``in the neighborhood of six and
  a half million'', against the exact \num{6384634}. From $n=11$ onwards our anchors therefore rest on
  tables published together with the generator, and the agreement confirms
  our pipeline rather than those tables.
\item \emph{One implementation of the closure.} The cross-check with
  \texttt{nauty} compares automorphism group orders only; the coherent
  closure itself rests on a single implementation on the production
  path, exercised on the control
  graphs. A systematic error making the closure too coarse would produce
  false alarms rather than false negatives, and an error equating closure
  and orbit partition is caught by the Shrikhande control, which must come
  out non-Schurian. On the sampled filter-negative population, the
  class counts have since been re-derived by an independent second
  implementation of $2$-WL with no deviation (the filter-negative
  cross-check of Section~\ref{sec:gates}); outside that sample the
  single-implementation caveat stands. The same applies, with
  more weight, to the filter: it settles $99.99\%$ of all verdicts, and it
  was held against the full check only for $n=8$, $9$ and $11$, that is on
  \num{443427} graphs. A filter that wrongly fires would produce false
  Schurity verdicts, and beyond $n=11$ nothing but the control graphs of
  Section~\ref{sec:controls} stands against that.
\end{enumerate}

\section{Reproducibility}\label{sec:repro}

The computation was carried out on a $24$-thread consumer machine under
Windows~11 with $64$~GB of memory without ECC, using CPython~3.12.10 for
$n\le 14$ and PyPy~3.11 (v7.3.23) for $n=15$ and $n=16$, with the two
exceptional splits recorded in Section~\ref{sec:gates}; the generators were
built with MinGW-w64 gcc~16.1.

The screening modules are written in Python and use the standard library
only. They accompany the arXiv version of this note as ancillary files ---
and are available on request before that --- together with the per-split
checkpoint files, the control-graph tests, the verification script mentioned
in Section~\ref{sec:gates}, and a README giving the exact command lines and
split granularities. Three files there differ minimally from the versions used
in the runs --- two to make the package self-contained without the author's
wider codebase, one so that the generator-dependent tests are skipped rather
than failing when the binary is absent --- and all three are listed in its
manifest. The external tools run
as subprocesses without a shell, namely \texttt{plantri} 5.8 and
\texttt{nauty} 2.8.9, with the source tarballs identified by

{\small\ttfamily
\begin{tabbing}
\hspace{2.2cm}\=\kill
plantri58 \> e78a944116fec9f2c9f5e484206276cc2b\\
          \> 0043bae803e9815f4b2683614629b8\\[2pt]
nauty2\_8\_9 \> c97ab42bf48796a86a598bce3e9269047c\\
          \> a2b32c14fc23e07208a244fe52c4ee
\end{tabbing}}

\noindent (SHA-256, split across two lines each). Neither tool's source was
altered. Two small compatibility shims of our own are linked into the
generator build under MinGW and are included with the ancillary files: a
replacement for \texttt{sys/times.h}, without which the generator does not
compile there, and the constructor setting \texttt{stdout} to binary mode
described in Section~\ref{sec:method}. Given the same generator
version, any split can be recomputed independently: the residue splitting is
deterministic, so a reader who wants to check a single split of the largest
stage can do so in minutes, without relying on our checkpoints at all.

\section*{Acknowledgments}

This work was carried out in close collaboration with the AI assistant
Claude (Anthropic; models Claude Fable~5 and Claude Opus~5, 2026), which
contributed to the implementation, the verification design, the
verification of the bibliography against the cited sources, and the
drafting of this note. Every
computational claim reported here is gated by the safeguards described in
Section~\ref{sec:method}; the numbers were re-derived from the raw
checkpoint files in a separate pass, and the manuscript underwent repeated
adversarial reviews by fresh model instances blind to the development
history. The author bears sole responsibility for the content.

\end{document}